\newsavebox{\savepar}
  \newtheorem{theorem}{Theorem}[section]
  \theoremstyle{definition}
\newtheorem{definition}[theorem]{Definition}
  \theoremstyle{remark}
  \newtheorem{remark}[theorem]{Remark}
\newtheorem{lemma}[theorem]{Lemma}
\theoremstyle{definition}
\theoremstyle{remark}
\begin{document}

\newcommand{\norm}[1]{\left\lVert #1\right\rVert}
\newcommand{\namelistlabel}[1]{\mbox{#1}\hfil}
\newenvironment{namelist}[1]{%
\begin{list}{}
{
\let\makelabel\namelistlabel
\settowidth{\labelwidth}{#1}
\setlength{\leftmargin}{1.1\labelwidth}
}
}{%
\end{list}}

\newcommand{\inp}[2]{\langle {#1} ,\,{#2} \rangle}
\newcommand{\vspan}[1]{{{\rm\,span}\{ #1 \}}}
\newcommand{\R} {{\mathbb{R}}}

\newcommand{\B} {{\mathbb{B}}}
\newcommand{\C} {{\mathbb{C}}}
\newcommand{\N} {{\mathbb{N}}}
\newcommand{\Q} {{\mathbb{Q}}}
\newcommand{\LL} {{\mathbb{L}}}
\newcommand{\Z} {{\mathbb{Z}}}

\newcommand{\BB} {{\mathcal{B}}}

\title{Zabreiko's lemma in 2-normed space and its applications.  }
\author{ Akshay S. RANE \footnote{Department of Mathematics, Institute of Chemical Technology, Nathalal Parekh Marg, Matunga, Mumbai 400 019, India, email :  as.rane@ictmumbai.edu.in,} 
\hspace {1mm}
}
\date{ }
\maketitle
\begin{abstract}
  	We prove the Zabreiko's lemma in 2-Banach spaces. As an application we shall prove a version of the closed graph theorem and open mapping theorem.
\end{abstract}

\noindent
Key Words : 2-normed space, Zabreiko's Lemma, closed graph theorem, open mapping theorem.

\smallskip
\noindent
AMS  subject classification : 46B20, 46C05, 46C15,46B99,46C99

\newpage\noindent 
\section{Introduction}
n-normed spaces can be found in \cite{Chen}, \cite{Gun1}, \cite{Gun2}, \cite{Gun3}, and  \cite{Gun4}. Open mapping theorem and closed graph theorem are fundamental theorems in functional analysis. In a usual normed space, Zabreiko \cite {Zab}
proved  a lemma due to which the major theorems of functional analysis, like open mapping theorem and closed graph theorem followed as corollaries. In case of 2-normed spaces \cite{Son} and \cite{Raji} proved open mapping and closed Graph theorem in the usual way as in usual normed spaces. In this paper we would like to extend the Zabreiko's lemma in $2-$normed spaces and the open mapping theorem and a version of the closed graph theorem follows as a consequence.
\section{Methods}
 We shall now introduce definitions and results which shall be used in this paper.
\begin{definition}
Let $X$ be a vector space over $\mathbb{R}$ of dimension $>1.$ An 2-norm is a real valued function from $X \times X$ satisfying the following properties.
\begin{enumerate}
	\item $\|x_1,x_2\|=0$ if and only if $x_1,x_2$ are linearly dependent.
	\item $\|x_1,x_2\|=\|x_2,x_1\|$.
	\item $\|\alpha x_1,x_2\|=|\alpha| \|x_1,x_2\|$ for $\alpha \in \mathbb{R}$.
	\item $\|x_1+x_1^\prime ,x_2\|\leq \|x_1 ,x_2\|+\|x_1^\prime ,x_2\|$
\end{enumerate}
Then  $(X,\|.,.\|)$ is called as a 2-normed space. 
\end{definition} 

\begin{definition}
A real valued  function $p$ on $X \times X$ satisfying properties $2$, $3$ and $4$ is called 2-seminorm. 
\end{definition}\noindent
We incorporate various definition which can be found in \cite{Son}, \cite{Freese} and \cite{Lal}.
\begin{definition}
	If $p$ is a 2-seminorm, then for $ e\in X,$ $p(x,e)$ is a semi-norm on $X$.
\end{definition}
\begin{definition}
	A sequence $x_n$ in $X$  is said to be convergent to $a \in X,$ if $$ \lim_{n \rightarrow \infty} \|x_n -a,y\|=0,$$ for every $y \in X.$
\end{definition}

\begin{definition}
	A sequence $x_n$ in $X$  is said to be Cauchy if $$ \lim_{n, m\rightarrow \infty} \|x_n -x_m,y\|=0,$$ for every $y \in X.$
\end{definition}

\begin{definition}
A 2-normed space	$X$  is said to be an $2-$Banach space if every Cauchy sequence in $X$ converges.
\end{definition}

\begin{definition}
An open ball in a 2-normed space is	$B_{ e_2}(a,r) := \left \lbrace x \in X : \|x-a,e_2\| <r \right \rbrace $ and a closed ball is 
	$B_{e_2}[a,r] := \left \lbrace x \in X : \|x-a,e_2\| \leq r \right \rbrace $\\
\end{definition}

\begin{definition}
	A subset $U$ of $X$ is said to be open in $X,$ if there exist $r>0$ and $e_2 \in X$  such that $ B_{ e_2}(a,r) \subset X$.
\end{definition}

\begin{definition}
	A linear operator $T: X \rightarrow Y $ is said to be $e$ bounded if there exist $M_e >0$ such that $$\|Tx,Te\| \leq M_e \|x,e\|,$$ where $X$ and $Y$ are $2-$normed spaces. The operator $T$ is said to be bounded if $T$ is $e$ bounded for each $x \in X.$
\end{definition}\noindent 
\section{Results}
We shall now prove the following lemma which will be used to prove the Zabreiko's Lemma.

\begin{lemma}
	Let $p$ be a seminorm on a $2-$normed space $(X,\|.,.\|)$ and for $\alpha>0$ consider $$ V_\alpha= \left \lbrace x \in X: p(x,e) \leq \alpha, \text{for}\; e \in X \right \rbrace$$
	Suppose there is $a \in X$ $r>0$ and $e_2 \in X$,  such that $$B_{e_2}[a,r] \subset \overline{V_\alpha}.$$ Then for every $\delta >0$, 
	$$B_{e_2}[0,\delta r] \subset \overline{V_{\delta\alpha}}.$$
\end{lemma}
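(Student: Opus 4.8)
The plan is to mimic the classical proof of Zabreiko's lemma in the ordinary normed setting, but carry the auxiliary vector $e_2$ along throughout. First I would reduce to a scaling statement: the set $V_\alpha$ is clearly convex (by properties 2, 3, 4 of the seminorm, $V_\alpha$ is balanced and convex) and homogeneous in $\alpha$ in the sense that $V_{\delta\alpha} = \delta V_\alpha$ as sets. Since multiplication by $\delta$ is a homeomorphism of $X$ in the topology generated by $\|\cdot, e_2\|$, it commutes with closure, so $\overline{V_{\delta\alpha}} = \delta\,\overline{V_\alpha}$. Likewise $B_{e_2}[0,\delta r] = \delta\,B_{e_2}[0,r]$ and the hypothesis translates, after translating $a$, into a statement about $\overline{V_\alpha}$ containing a ball around $0$.

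The key step is to show that the hypothesis $B_{e_2}[a,r] \subset \overline{V_\alpha}$ already forces $B_{e_2}[0,r] \subset \overline{V_\alpha}$. For this I would use that $\overline{V_\alpha}$ is convex and balanced: it is balanced because $V_\alpha$ is (if $p(x,e)\le\alpha$ for all $e$ then $p(\lambda x,e) = |\lambda|p(x,e) \le \alpha$ for $|\lambda|\le 1$), and closure preserves both properties. Now if $\|x - a, e_2\| \le r$ lies in $\overline{V_\alpha}$, then so does $-x$ shifted appropriately; more precisely, for any $y$ with $\|y, e_2\| \le r$ we can write $y = \tfrac12\big((a+y) - (a-y)\big)$, and both $a+y$ and $a-y$ lie in $B_{e_2}[a,r] \subset \overline{V_\alpha}$ (the latter because $\|(a-y)-a,e_2\| = \|-y,e_2\| = \|y,e_2\|\le r$), so convexity gives $y \in \overline{V_\alpha}$. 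Hence $B_{e_2}[0,r] \subset \overline{V_\alpha}$.

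Combining the two steps: apply the scaling homeomorphism $x \mapsto \delta x$ to the inclusion $B_{e_2}[0,r]\subset\overline{V_\alpha}$ to obtain $B_{e_2}[0,\delta r] = \delta B_{e_2}[0,r] \subset \delta\,\overline{V_\alpha} = \overline{\delta V_\alpha} = \overline{V_{\delta\alpha}}$, which is exactly the claim. I expect the main technical point — and the place where the $2$-normed structure genuinely matters rather than being cosmetic — to be verifying that the $\|\cdot, e_2\|$-balls scale correctly and that scalar multiplication is continuous for this (merely semi-)norm with the fixed companion vector $e_2$; one must be a little careful because $\|\cdot, e_2\|$ is only a seminorm on $X$ (it vanishes on the line through $e_2$), so $B_{e_2}[a,r]$ is really a "cylinder" rather than a bounded set, but convexity, balancedness and positive homogeneity still go through verbatim, and that is all the argument uses.
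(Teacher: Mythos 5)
Your proposal is correct and follows essentially the same route as the paper: the heart of both arguments is the symmetrization $y=\tfrac12\bigl[(a+y)-(a-y)\bigr]$ with $a\pm y\in B_{e_2}[a,r]\subset\overline{V_\alpha}$, followed by dilation by $\delta$ using $V_{\delta\alpha}=\delta V_\alpha$ and $B_{e_2}[0,\delta r]=\delta B_{e_2}[0,r]$. The only difference is packaging: you invoke convexity and balancedness of $\overline{V_\alpha}$ (preserved under closure), whereas the paper carries out the same midpoint and scaling steps explicitly with approximating sequences $u_m\to x+a$, $v_m\to -x+a$ in $V_\alpha$.
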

\begin{proof}
	We shall prove the result for $\delta=1$. Let $ x \in B_{e_2}[0, r].$ 
	Then $$\|(x+a)-a,e_2\|\leq r,\; \|(-x+a)-a,e_2\|\leq r. $$
	This implies that $ x+a, -x+a \in B_{e_2}[a,r] \subset \overline{V_\alpha}$. There is a sequence $u_m$ and $v_m$ in $  V_\alpha $ such that $ u_m  \rightarrow x+a $ and $v_m \rightarrow -x+a$ with $p(u_m,e) \leq \alpha,$ and $p(v_m,e) \leq \alpha.$
	Hence $$ \frac{u_m-v_m}{2} \rightarrow x$$ with $$p(\frac{u_m-v_m}{2},e) \leq \alpha.$$
	So $x \in  \overline{V_\alpha}$ and we are done. Now consider $\delta >0$ and $$\|x,e_2\|\leq \delta r$$ Let $y=\frac{x}{\delta}.$ Then $\|y,e_2\| \leq r.$ This implies $y \in  \overline{V_\alpha}$. So that there is a sequence $y_m$ in $V_\alpha$ so that $y_m \rightarrow y$ and $p(y_m,e) \leq \alpha.$ And we have $$\delta y_m \rightarrow \delta y,\; p(\delta y_m,e) \leq \delta \alpha.$$
	This implies $x \in \overline{V}_{\delta \alpha}.$
\end{proof}\noindent
We shall now prove the Zabreiko's theorem in $2-$normed spaces. The Baire Category theorem which we shall use to prove Zabreiko's Lemma for $2-$normed spaces is proved in \cite{Raji}.
\begin{theorem}
	Let $p$ be a countabely subadditive seminorm on a $2-$Banach space. Then there is $e_2 \in X$ and a constant $M>0$ such that $$ p(x,e) \leq M \|x,e_2\|,$$ for any $e \in X.$ In particular,
	$$ p(x,e_2) \leq M \|x,e_2\|.$$ 
\end{theorem}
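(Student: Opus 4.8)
The plan is to mimic the classical proof of Zabreiko's lemma, replacing the role of the norm by the seminorm $\|\cdot, e_2\|$ on $X$ for a suitably chosen $e_2$, and using the previous Lemma as the substitute for the usual scaling argument on sublevel sets. First I would set $V_\alpha = \{x \in X : p(x,e) \le \alpha \text{ for } e \in X\}$ as in the Lemma and observe that, since $p$ is countably subadditive and hence subadditive and absolutely homogeneous in its first slot, $X = \bigcup_{n=1}^\infty V_n$. By the Baire Category theorem for $2$-Banach spaces (from \cite{Raji}), some $\overline{V_n}$ has nonempty interior, so there exist $a \in X$, $r > 0$ and $e_2 \in X$ with $B_{e_2}[a,r] \subset \overline{V_n}$. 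Applying the preceding Lemma, I get $B_{e_2}[0,\delta r] \subset \overline{V_{\delta n}}$ for every $\delta > 0$; rescaling, this says precisely that for every $\rho > 0$, $B_{e_2}[0,\rho] \subset \overline{V_{Mn\rho/r}}$, i.e. $\overline{V_\alpha} \supset B_{e_2}[0, \alpha r/n]$ for all $\alpha>0$. Writing $c = r/n$, the key intermediate fact is: $\|x, e_2\| \le c\alpha \implies x \in \overline{V_\alpha}$.

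The heart of the argument is then the standard successive-approximation step, upgrading this statement from $\overline{V_\alpha}$ to $V_\alpha$ (with a worse constant), and here is where countable subadditivity of $p$ is essential. Given $x$ with $\|x,e_2\| \le c\alpha$, I would choose $x_1 \in V_\alpha$ with $\|x - x_1, e_2\|$ as small as I like, say $\le c\alpha/2$; then $x - x_1$ lies in $\overline{V_{\alpha/2}}$, so pick $x_2 \in V_{\alpha/2}$ with $\|x - x_1 - x_2, e_2\| \le c\alpha/4$, and continue, obtaining $x_n \in V_{\alpha/2^{n-1}}$ with $\|x - \sum_{k=1}^n x_k, e_2\| \le c\alpha/2^n$. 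The partial sums $s_n = \sum_{k=1}^n x_k$ form a Cauchy sequence in the $2$-Banach space $X$ — this needs the estimate $\|s_n - s_m, y\| \le \sum_{k} p(x_k, \cdot)$-type control, which is where I must be a little careful since convergence in a $2$-normed space is tested against every $y \in X$; I would handle this by noting $\sum \|x_k, y\|$ converges for each $y$ by comparison with a geometric series (bounding $\|x_k, y\|$ via $p$ applied in the appropriate slot, using Definition 2.3 that $p(\cdot, e)$ is a genuine seminorm). So $s_n \to$ some limit, and since $\|x - s_n, e_2\| \to 0$, that limit equals $x$. Finally, countable subadditivity gives $p(x, e) = p(\sum_k x_k, e) \le \sum_k p(x_k, e) \le \sum_k \alpha/2^{k-1} = 2\alpha$, so $x \in V_{2\alpha}$.

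Combining, for any $x \in X$ with $x \ne 0$ (the case $x=0$ being trivial since $p(0,e)=0$), take $\alpha = \|x, e_2\|/c$ to conclude $p(x,e) \le 2\alpha = (2/c)\|x, e_2\|$ for every $e \in X$; setting $M = 2/c = 2n/r$ yields $p(x,e) \le M\|x,e_2\|$, and specializing $e = e_2$ gives the last displayed inequality.

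I expect the main obstacle to be the convergence of the series $\sum x_k$ in the $2$-norm: unlike the classical case, Cauchyness must be verified against an arbitrary $y \in X$, not just against $e_2$, so the geometric decay I extract at each step (which is phrased in terms of $\|\cdot, e_2\|$ and $p(\cdot, e)$) has to be leveraged to control $\|\cdot, y\|$ for all $y$ simultaneously. The clean way around this is to run the whole approximation using the seminorm bound $p(x_k, e) \le \alpha/2^{k-1}$ — which holds for every $e$ — rather than the $e_2$-specific distance bound, so that $\sum p(x_k, y)$ converges for each fixed $y$; one then needs the mild point that $p(\cdot,y)$ dominates (a multiple of) $\|\cdot, y\|$ is \emph{not} available, so instead I would observe directly that $s_n$ is Cauchy by a diagonal/telescoping argument using only that each $\|x - s_n, e_2\| \to 0$ together with the fact that the increments $x_{n+1} = s_{n+1} - s_n$ are controlled; if $X$'s completeness is only guaranteed for sequences that are Cauchy against every $y$, one must either strengthen the choice of $x_k$ (choosing them with $p(x_k, \cdot)$-smallness, which is genuinely available from $x_k \in V_{\alpha/2^{k-1}}$) or invoke an auxiliary lemma from \cite{Son} relating the $2$-norm topology to the family of seminorms $\|\cdot, y\|$. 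I would adopt the first route, as it keeps the proof self-contained within the tools quoted in the excerpt.
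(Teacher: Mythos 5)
You follow the paper's overall skeleton (the sets $V_n$, the Baire category theorem from \cite{Raji}, Lemma 3.1 to translate and rescale the ball, a geometric successive approximation, then countable subadditivity), and your constant bookkeeping $M=2n/r$ is harmless. The genuine gap is exactly the one you flag at the end and never close: to apply countable subadditivity of $p$ you must know that $x=\sum_{k=1}^{\infty}x_k$ as a convergent series in the $2$-normed space, i.e. $\|x-s_n,y\|\to 0$ for \emph{every} $y\in X$, but your construction only ever measures the error against the single vector $e_2$, giving $\|x-s_n,e_2\|\le c\alpha/2^{n}$. Since $\|\cdot,e_2\|$ is only a seminorm (it vanishes when the first argument is a multiple of $e_2$ and carries no information about $\|\cdot,y\|$ for other $y$), this neither shows that $(s_n)$ is Cauchy in the sense of Definition 2.5 nor, even granting that $(s_n)$ converges to some $z$, that $z=x$; you would only obtain $\|x-z,e_2\|=0$. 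None of the three repairs you sketch works as stated: the bound $p(x_k,e)\le\alpha/2^{k-1}$ cannot control $\|x_k,y\|$ because no inequality of the form $\|x,y\|\le C\,p(x,y)$ is available (the theorem being proved is the reverse inequality), a diagonal or telescoping argument from $e_2$-decay alone says nothing in the remaining directions, and the appeal to \cite{Son} is unspecified.

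The paper handles precisely this point by making the approximation step uniform in the second slot: at each stage it chooses $x_k\in B_{\tilde e}[u_{k-1},\epsilon_k r]\cap V_{\epsilon_{k-1}m}$ and records $\|u_{k-1}-x_k,\tilde e\|\le\epsilon_k r$ for every $\tilde e\in X$ (not just $\tilde e=e_2$), so that at the end $\|x-\sum_{k=1}^{n}x_k,\tilde e\|\le\epsilon_n r\to 0$ for all $\tilde e$, which is exactly the convergence required before countable subadditivity is invoked. To repair your proposal you must build this control over all second slots into the choice of each $x_k$ (and justify that elements of $\overline{V_\alpha}$ admit such approximants), rather than only keeping the $e_2$-estimate.
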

\begin{proof}
	For $n \in \mathbb{N}$,
	let $$ V_n = \left \lbrace x \in X: p(x,e) \leq n , e \in X \right \rbrace$$
	Then $$ X= \bigcup_{n=1}^\infty V_n = \bigcup_{n=1}^\infty \overline{V}_n$$
	As a consequence $$\bigcap_{n=1}^\infty  \overline{V}_n^c = \phi.$$
	By the Baire Category theorem atleast one of $\overline{V}_n^c $ will not be dense in $X.$ As a result $$ \overline{\overline{V}_m^c} \subset X.$$ That is there is $a \in X $ such that $ a $ does not belong to $ \overline{\overline{V}_m^c}.$
	There exist $r>0$ and $e_2 \in X$ such that $$B_{e_2}[a,r] \cap
	\overline{V}_m^c= \phi$$
	This means $$ B_{e_2}[a,r] \subset \overline{V}_m$$
	Let $x \in X$. Let $ \epsilon >0. $ Define $$ \epsilon_0= \frac{\|x,e_2\|}{r}$$ 
	and $$ \epsilon= m\sum_{k=1}^\infty \frac{\epsilon}{ m 2^k}= m \sum_{k=1}^\infty \epsilon_k.$$
Since $\|x,e_2\|=r \epsilon_0 ,$ implies $ x \in B_{e_2}[0,\epsilon_0 r] $. From the above lemma
$$  B_{e_2}[0,\epsilon_0 r] \subset \overline{ V}_{\epsilon_0 m}$$
$x \in  \overline{ V}_{\epsilon_0 m}$, for $\epsilon_1r$ and any $\tilde{e} \in X$
$$ B_{\tilde{e}}[x, \epsilon_1 r] \bigcap  V_{\epsilon_0 m} \neq \phi. $$
There exist $x_1 \in  B_{\tilde{e}}[x, \epsilon_1 r] \bigcap V_{\epsilon_0 m}. $
This implies \begin{equation}\|x-x_1,\tilde{e}\|\leq \epsilon_1 r,\; \; p(x_1,e) \leq \epsilon_0 m,\end{equation} for any $ \tilde{e}, e \in X.$ In particular for $ \tilde{e}=e_2$
$$\|x-x_1,e_2\|\leq \epsilon_1 r.$$
Let $u_1=x-x_1$ so that $ x=x_1+u_1$ and $\|u_1,e_2\|\leq \epsilon_1 r $. So that $ u_1 \in B_{e_2}[0,\epsilon_1 r]$
From the above lemma once again
$$  B_{e_2}[0,\epsilon_1 r] \subset \overline{ V}_{\epsilon_1 m}$$
$u_1 \in  \overline{ V}_{\epsilon_1 m}$, for $\epsilon_2r$ and any $\tilde{e} \in X$
$$ B_{\tilde{e}}[u_1, \epsilon_2 r] \bigcap  V_{\epsilon_1 m} \neq \phi $$
There exist $x_2 \in  B_{\tilde{e}}[u_1, \epsilon_2 r] \bigcap V_{\epsilon_1 m} $
This implies \begin{equation}\|u_1-x_2,\tilde{e}\|\leq \epsilon_2 r,\; \; p(x_2,e) \leq \epsilon_1 m,\end{equation} for any $ \tilde{e}, e \in X.$ In particular for $ \tilde{e}=e_2$
$$\|u_1-x_2,e_2\|\leq \epsilon_2 r$$
Let $u_2=u_1-x_2 $ so that $x=x_1+u_1= x_1+x_2+u_2$ and $\|u_2,e_2\| \leq \epsilon_2 r.$ Now using previous lemma $$u_2 \in B_{e_2}[0,\epsilon_2 r] \subset \overline{V}_{\epsilon_2 m}$$ We get $x_3$ such that
  \begin{equation}\|u_2-x_3,\tilde{e}\|\leq \epsilon_3 r,\; \; p(x_3,e) \leq \epsilon_2 m,\end{equation} for any $ \tilde{e}, e \in X.$ 
  Continuing this way we get, for each $k$, $ x_k,u_k \in X$ such that 
   \begin{equation}\|u_{k-1}-x_k,\tilde{e}\|\leq \epsilon_k r,\; \; p(x_k,e) \leq \epsilon_{k-1} m,\end{equation} for any $ \tilde{e}, e \in X.$ As a result
   $$ x=x_1+u_1=x_1+x_2+u_2=x_1+x_2+\ldots+x_k+u_k$$ Thus for any $\tilde{e} \in X$ we have
   $$ \|x-\sum_{k=1}^n x_k,\tilde{e}\| =\|u_n, \tilde{e}\|\leq  \epsilon_m r= \frac{\epsilon r }{m 2^n}\rightarrow 0 $$ as $ n \rightarrow \infty.$ We see that $$ x=\sum_{k=1}^\infty x_k.$$ By the countable subadditivity of $p$ we have
   	$$ p(x,e) \leq \sum_{k=1}^\infty p(x_k,e) \leq m \sum_{k=1}^\infty \epsilon_{k-1} = m\epsilon_0  + \epsilon.$$
   The above statement is true for any $\epsilon.$	This implies for $e \in X$ we have 
   	$$ p(x,e)\leq \frac{m}{r} \|x,e_2\|.$$ 
   	In particular for $e=e_2$
   	$$  p(x,e_2)\leq \frac{m}{r} \|x,e_2\|.$$
\end{proof}
\begin{remark}
	Let $T:X\rightarrow Y$ be linear operator. Then for $x,e \in X$ $$p(x,e)=\|Tx,Te\|$$ is an example of a $2-$seminorm on $X.$
\end{remark}

\begin{definition}
	A linear operator $T: X \rightarrow Y $ is said to be  closed if for a sequence $x_n \rightarrow x $ in $X$, $Tx_n \rightarrow y$ in $Y$ then $y=Tx.$
\end{definition}\noindent 

We require the following result.
\begin{theorem}
Let $X$ be a $2-$Banach space. Then if $\displaystyle \sum_{n=1}^\infty \|x_n,e\| < \infty$ for each $ e \in X.$ Then $\displaystyle  \sum_{n=1}^\infty x_n < \infty.$
\end{theorem}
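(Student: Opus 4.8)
The plan is to reduce the statement to the classical fact that absolute convergence implies convergence in a complete space, carried out one direction $e$ at a time. Write $s_n := \sum_{k=1}^n x_k$ for the sequence of partial sums; I want to show $(s_n)$ is Cauchy in the sense of the Cauchy-sequence definition, and then apply the hypothesis that $X$ is a $2$-Banach space to conclude that $(s_n)$ converges to some $s \in X$, which is exactly the assertion $\sum_{k=1}^\infty x_k = s$ in $X$.

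The one computation is this: fix $e \in X$ arbitrary. For integers $m > n$, iterating property $4$ of the $2$-norm (an immediate induction on the number of summands) gives
$$\Big\| s_m - s_n,\, e \Big\| \;=\; \Big\| \sum_{k=n+1}^{m} x_k,\, e \Big\| \;\le\; \sum_{k=n+1}^{m} \|x_k,\, e\|.$$
By hypothesis the scalar series $\sum_{k=1}^\infty \|x_k,e\|$ converges, so by the Cauchy criterion for real series its tails $\sum_{k=n+1}^{m}\|x_k,e\|$ are uniformly small in $m$ once $n$ is large; hence the right-hand side tends to $0$ as $n,m \to \infty$. Since $e \in X$ was arbitrary, this is precisely the statement that $(s_n)$ is a Cauchy sequence in $X$, and completeness finishes the argument.

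I do not anticipate a genuine obstacle here: the only points needing care are bookkeeping ones, namely justifying the finite triangle inequality $\|\sum_{k=n+1}^m x_k, e\| \le \sum_{k=n+1}^m \|x_k,e\|$ from property $4$, and noting that convergence of $\sum_k \|x_k,e\|$ for each fixed $e$ delivers the Cauchy estimate for every direction simultaneously (each $e$ separately, which is all the definition of Cauchy requires). The completeness hypothesis is invoked exactly once, at the final step, and that is the substantive content of the theorem.
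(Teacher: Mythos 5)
Your proposal is correct and follows essentially the same route as the paper: both form the partial sums $s_n$, bound $\|s_m-s_n,e\|$ by the tail $\sum_{k=n+1}^m\|x_k,e\|$ via the triangle inequality for each fixed $e$, and invoke completeness of the $2$-Banach space to conclude convergence. No substantive difference to report.
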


\begin{proof}
	Let $$ s_n =\sum_{k=1}^n x_k$$For any $e \in X$ and $n>m$, consider $$\|s_n-s_m,e\|= \|\sum_{k=m+1}^n x_k,e\|\leq \sum_{k=m+1}^n \|x_k,e\| $$
	Since $\displaystyle \sum_{k=m+1}^n \|x_k,e\| \rightarrow 0$ as $ n,m \rightarrow \infty$ 
	This implies $s_n$ is a Cauchy sequence in $2-$Banach space and $\lim s_n$ exists. As a result $\displaystyle \sum_{k=1}^\infty x_k$ converges.
\end{proof} \noindent
In the following results, we apply the Zabreiko's Lemma to prove a version of the closed graph and open mapping theorem.
\begin{theorem}
	Let $X$ and $Y$ be $2-$Banach spaces and $T: X \rightarrow Y$  be a closed linear operator. Then $T$ is e bounded.
\end{theorem}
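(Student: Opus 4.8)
The plan is to deduce this from Zabreiko's lemma (Theorem 3.2) applied to the $2$-seminorm manufactured from $T$. Define $p:X\times X\to\mathbb{R}$ by $p(x,e)=\|Tx,Te\|$; by the Remark this is a $2$-seminorm on $X$. Granting for the moment that $p$ is countably subadditive, Theorem 3.2 produces $e_2\in X$ and a constant $M>0$ with $p(x,e)\le M\|x,e_2\|$ for every $e\in X$; specialising to $e=e_2$ gives $\|Tx,Te_2\|\le M\|x,e_2\|$, which is exactly the assertion that $T$ is $e_2$-bounded. So the entire proof reduces to checking countable subadditivity of $p$.

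To do that, let $(x_n)$ be a sequence in $X$ for which $\sum_{n=1}^{\infty}x_n$ converges in $X$ to some $x$, put $s_N=\sum_{n=1}^{N}x_n$, and fix $e\in X$; we must show $\|Tx,Te\|\le\sum_{n=1}^{\infty}\|Tx_n,Te\|$. If the right-hand side is $+\infty$ there is nothing to prove, so assume it is finite. The strategy is: first show that $\sum_{n}Tx_n$ converges in $Y$ to some $y$ --- this is where the absolute-convergence theorem for $2$-Banach spaces (Theorem 3.5) comes in; then, because $s_N\to x$ in $X$ and $Ts_N=\sum_{n=1}^{N}Tx_n\to y$ in $Y$, closedness of $T$ forces $y=Tx$; finally let $N\to\infty$ in $\|Ts_N,Te\|\le\sum_{n=1}^{N}\|Tx_n,Te\|$ to obtain $\|Tx,Te\|\le\sum_{n=1}^{\infty}\|Tx_n,Te\|$. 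This establishes countable subadditivity and, with the first paragraph, the theorem.

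The step I expect to be the main obstacle is the convergence of $\sum_{n}Tx_n$ in $Y$. Theorem 3.5 requires $\sum_{n}\|Tx_n,e'\|<\infty$ for every $e'\in Y$, while the data at hand only control $\sum_{n}\|Tx_n,Te\|=\sum_{n}p(x_n,e)$ for $e\in X$, i.e.\ only in the directions lying in $T(X)$. One therefore has to be careful about exactly how ``countably subadditive'' is quantified over the second slot of $p$, and to use the sequences $(x_n)$ that actually arise --- in Zabreiko's proof one has the uniform bound $p(x_n,e)\le\epsilon_{n-1}m$ for all $e\in X$, so that $\|Tx_n,Te\|$ is summable uniformly over $e\in X$ --- together with the closedness of $T$ to pin down the limit $y$. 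Once the series $\sum_{n}Tx_n$ is known to converge in $Y$, identifying its sum with $Tx$ and passing to the limit in the partial-sum inequality are routine, and the bound $\|Tx,Te_2\|\le M\|x,e_2\|$ then follows from Zabreiko's lemma exactly as indicated.
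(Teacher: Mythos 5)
Your proposal follows essentially the same route as the paper's own proof: the same seminorm $p(x,e)=\|Tx,Te\|$, countable subadditivity established by first summing $\sum_n Tx_n$ in $Y$ via the absolute-convergence theorem, then using closedness of $T$ to identify the sum with $Tx$ and passing to the limit in the partial-sum inequality, and finally Zabreiko's lemma with $e=e_2$ to get $\|Tx,Te_2\|\le M\|x,e_2\|$. The obstacle you flag --- that the convergence theorem demands $\sum_n\|Tx_n,e'\|<\infty$ for \emph{every} $e'\in Y$, while the hypothesis only controls $e'=Te$ with $e\in X$ --- is genuine, but the paper's proof simply applies the theorem without addressing it, so on this point you are being more careful than the published argument rather than missing anything it supplies.
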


\begin{proof}
	We show that for $e \in X$ $$p(x,e)=\|Tx,Te\|$$ is a countabely subadditive 2 norm on $X.$ Let $x=\displaystyle \sum_{n=1}^\infty x_n$ with $\displaystyle \sum_{n=1}^\infty\|T(x_n ), Te\|< \infty.$ Since $Y$ is a $2-$Banach space by the above result $\displaystyle  y=\sum_{n=1}^\infty Tx_n < \infty.$
	Define $s_n =\displaystyle  \sum_{k=1}^n x_k $. Note that $s_n \rightarrow x$ and $T(s_n) \rightarrow y.$ As $T$ is closed $y=T(x)=\displaystyle \sum_{n=1}^\infty Tx_n $. 
	$$p(x,e)= \|Tx,Te\|\leq \sum_{n=1}^\infty \|Tx_n,Te\|= \sum_{n=1}^\infty  p(x_n,e) $$ By the Zabreiko's lemma there is an $M.0$ and $e_2 \in X$ such that $$\|Tx,Te_2\|= p(x,e_2) \leq M \|x,e_2\|,$$ which proves the result.
\end{proof}

\begin{theorem}
	Let $X$ and $Y$ be $2-$Banach spaces. Let $T: X\rightarrow Y$ be continuous and surjective. Then $T$ is an open map.
\end{theorem}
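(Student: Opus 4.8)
The plan is to run the classical Zabreiko derivation of the open mapping theorem, the one new ingredient being that the auxiliary gauge has to be manufactured as a $2$-seminorm on $Y$.

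First I would reduce to a quantitative containment. Since $T$ is linear, $T(B_{\tilde e}(a,r))=Ta+r\,T(B_{\tilde e}(0,1))$ for all $a\in X$, $r>0$, $\tilde e\in X$. Hence, if for every $\tilde e\in X$ the set $T(B_{\tilde e}(0,1))$ contains some open ball $B_{v}(0,c)$ of $Y$, then for an arbitrary open $U\subseteq X$ and $a\in U$, picking $\tilde e$ and $r>0$ with $B_{\tilde e}(a,r)\subseteq U$ yields $B_{v}(Ta,rc)\subseteq T(U)$, so $T(U)$ is open. It therefore suffices to fix $\tilde e\in X$ and exhibit such a ball. To that end I define $p_{\tilde e}\colon Y\to[0,\infty)$ by $p_{\tilde e}(y)=\inf\{\,\|x,\tilde e\|_X:\ Tx=y\,\}$; this is finite because $T$ is onto, and it is a seminorm on $Y$, the subadditivity and absolute homogeneity being inherited from $\|\cdot,\tilde e\|_X$ via the linearity of $T$. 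Since Zabreiko's Lemma is phrased for $2$-seminorms, I would then pass to the $2$-seminorm $p(y,v):=p_{\tilde e}(y)\,p_{\tilde e}(v)$ on $Y$ (symmetry is immediate, and positive homogeneity and the triangle inequality in each slot reduce at once to those of $p_{\tilde e}$).

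The heart of the proof is the countable subadditivity of $p$, equivalently: if $\sum_n y_n$ converges in $Y$ and $\sum_n p_{\tilde e}(y_n)<\infty$, then $p_{\tilde e}(\sum_n y_n)\le \sum_n p_{\tilde e}(y_n)$. Given $\varepsilon>0$, choose $x_n\in X$ with $Tx_n=y_n$ and $\|x_n,\tilde e\|_X\le p_{\tilde e}(y_n)+\varepsilon 2^{-n}$; if $\sum_n x_n$ converges in $X$ to some $x$, then continuity of $T$ gives $Tx=\sum_n y_n$, and $\|x,\tilde e\|_X\le\sum_n\|x_n,\tilde e\|_X\le\sum_n p_{\tilde e}(y_n)+\varepsilon$, so letting $\varepsilon\to0$ finishes this step. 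The obstacle — and, I expect, the real work — lies precisely in the convergence of $\sum_n x_n$: the result that an absolutely convergent series in a $2$-Banach space converges requires $\sum_n\|x_n,e\|_X<\infty$ for \emph{every} $e\in X$, whereas the choice above only controls the distinguished direction $\tilde e$. One must therefore select the preimages $x_n$ (exploiting the freedom of adding kernel elements) so that they are summable in all directions simultaneously while not spoiling the estimate in the $\tilde e$-direction; this is the crux.

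Finally, apply Zabreiko's Lemma to $p$: there are $e_2'\in Y$ and $M>0$ with $p_{\tilde e}(y)\,p_{\tilde e}(v)=p(y,v)\le M\|y,e_2'\|_Y$ for all $y,v\in Y$. If $p_{\tilde e}\equiv 0$ then every $y$ has a preimage $x$ with $\|x,\tilde e\|_X<1$, so $T(B_{\tilde e}(0,1))=Y$ and it contains every ball of $Y$; otherwise choose $v_0$ with $p_{\tilde e}(v_0)>0$, so that $p_{\tilde e}(y)\le (M/p_{\tilde e}(v_0))\,\|y,e_2'\|_Y$ for all $y$. Setting $c:=p_{\tilde e}(v_0)/M$, any $y$ with $\|y,e_2'\|_Y<c$ satisfies $p_{\tilde e}(y)<1$, that is $y=Tx$ for some $x$ with $\|x,\tilde e\|_X<1$, so $y\in T(B_{\tilde e}(0,1))$. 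Thus $B_{e_2'}(0,c)\subseteq T(B_{\tilde e}(0,1))$, and by the reduction in the first paragraph $T$ is an open map.
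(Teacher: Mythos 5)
Your overall strategy is the same as the paper's: build a gauge on $Y$ from infima of norms of preimages, verify countable subadditivity by choosing preimages within $\varepsilon 2^{-k}$ of the infimum, invoke Zabreiko's Lemma, and then translate the resulting estimate into openness of $T$. The packaging differs slightly (you form the product $2$-seminorm $p_{\tilde e}(y)\,p_{\tilde e}(v)$ and reduce to showing $T(B_{\tilde e}(0,1))$ contains a ball, while the paper works directly with $p(y,e')=\inf\{\|x,e\|: Tx=y,\ Te=e'\}$ and runs the translation argument at the end), but these are cosmetic. The problem is that you have not actually proved the one step you yourself identify as the heart of the matter: the convergence of $\sum_n x_n$ in $X$. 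The paper's Theorem 3.3 requires $\sum_n\|x_n,e\|<\infty$ for \emph{every} $e\in X$, whereas your choice of preimages only gives control in the single direction $\tilde e$. You propose to repair this by perturbing each $x_n$ by an element of $\ker T$ so as to gain summability in all directions at once without losing the $\tilde e$-estimate, but you never exhibit such a choice, and it is not clear it exists: the coset $x_n+\ker T$ need not contain vectors that are simultaneously small in every direction $e$, so as written the countable subadditivity of $p$ --- and with it the applicability of Zabreiko's Lemma --- is unproved. Since everything after this point depends on that estimate, the proposal is incomplete as a standalone proof.

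It is worth saying that you have put your finger on a soft spot of the published argument rather than created a new one: the paper's proof likewise bounds only $\sum_k\|x_k,e\|$ for one choice of $e$ and then invokes the series-convergence theorem, which formally needs the bound for all $e$; it passes over this without comment (and similarly treats the infimum as attained when extracting $x$ with $\|x,e\|\le M\|y,e_2\|$, where a harmless factor of slack is needed). So your diagnosis of where the real work lies is accurate, but flagging the crux is not the same as resolving it; to have a complete proof you must either carry out the kernel-adjustment argument, or strengthen the hypothesis under which the series theorem is applied, or reformulate the gauge so that the required summability is built in.
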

\begin{proof}
	Define $$p(y,e^{\prime})= \inf \left \lbrace \|x,e\| : T(x)=y,\; T(e)=e^{\prime} \right \rbrace $$ Since $T$ is surjective and $\|.,.\|$ is non negative, $p(y,e^\prime)$ is well defined. it is easy to see that $p(kx,e)=|k| p(x,e).$ We shall show that $p$ is countabely subadditive seminorm on $Y$. Let $y=\sum_{k=1}^\infty y_k$ with $$\sum_{k=1}^\infty p(y_k,e^\prime )< \infty.$$
	Let $\epsilon >0$. For $\frac{\epsilon}{2^k} $ there is $x_k \in X $ such that $T(x_k)=y_k$ and $$ \|x_k, e\| < p(y_k,e^\prime) + \frac{\epsilon}{2^k}$$
	This implies $$ \sum_{k=1}^\infty \|x_k,e\| < \sum_{k=1}^\infty p(y_k,e^\prime)+ \epsilon$$
	As a result $$ \sum_{k=1}^\infty \|x_k,e\| < \infty $$ and as $X$ is a 2-Banach space $x=\sum_{k=1}^\infty x_k $ converges. $T$ closed $$ T(x)= \sum_{n=1}^\infty T(x_n)=\sum_{n=1}^\infty y_n=y $$
	As a result $$p(y,e^\prime)=\|y,e^\prime\| \leq \|x,e\|\leq \sum_{k=1}^\infty \|x_k,e\|< \sum_{k=1}^\infty p(y_k,e^\prime) + \epsilon $$
\noindent 	
 By Zabreiko's Lemma, there exist $e_2 \in Y$, $M >0$ such that $ p(y,\tilde{e}) < M \|y,e_2\|.$
  There exist $x \in X$ such that $T(x)=y,T(e)=e^\prime $ and $$ \|x,e\| \leq M \|y,e_2\|.$$
  Consider an open set $E$ in X and let $x_0 \in X$. There exits $ \delta >0$ and $ e \in X$ such that $B_e(x_0,\delta) \subset E.$ Let $y \in Y$, $ \|y-T(x_0), e_2\| < \frac{\delta}{M}.$ We shall show that $y \in T(E).$ Since $y -T(x_0) \in Y $ there exist $x \in X$ such that $T(x)=y-T(x_0)$ and $$ \|x,e\| \leq M \|y-T(x_0),e_2\|< \delta$$
  Thus $$\|x+x_0-x_0,e \| =\|x\| < \delta $$ This implies $x+x_0 \in B_e(x_0,\delta) \subset E$ So that $ y=T(x) +T(x_0) \in T(E) $
\end{proof}
\section{Conclusions}
We prove the Zabreiko's Lemma in 2-normed spaces, but for n-normed s]paces can be generalized in a similar manner. However using the Zabreiko's Lemma we were able to prove that a closed linear operator is e-bounded. It would be interest to show when the operator is bounded or continuous. Also question could be asked is whether analog of uniform boundedness principle could be obtained from the Zabreiko's lemma.

\section{List of Abbreviations}
\begin{enumerate}
	\item $B_{ e_2}(a,r) := \left \lbrace x \in X : \|x-a,e_2\| <r \right \rbrace. $ 
	\item $B_{e_2}[a,r] := \left \lbrace x \in X : \|x-a,e_2\| \leq r \right \rbrace. $
	\item$\overline{V}:$ closure of the set $V.$
	
\end{enumerate}
	
	\section{Declarations}
	Availability of data and materials: Not applicable \\
	Competing interests : Not Applicable\\
	Funding : Not Applicable\\
	Authors' contributions : Not Applicable.\\
	Acknowledgements : UGC FRP program, India\\
	Authors' information :\\
	UGC Assistant Professor,\\
	Department o Mathematics,\\
	Institute of Chemical technology, Mumbai.


\end{document}